\theoremstyle{plain}
\newtheorem{theorem}{Theorem}[section]
\newtheorem{definition}[theorem]{Definition}
\numberwithin{equation}{section}
\begin{document}

\title[R$_{II}$ type three term relations for bivariate polynomials]{R$_{II}$ type three term relations for bivariate polynomials orthogonal with respect to varying weights}

\author[C. F.~Bracciali, A. M.~Delgado, L.~Fern\'{a}ndez, and T. E.~P\'{e}rez]
{Cleonice F.~Bracciali, Antonia M.~Delgado, Lidia Fern\'{a}ndez, and Teresa E.~P\'{e}rez}

\address[C. F.~Bracciali]{Departamento de Matem\'{a}tica, IBILCE, UNESP - Universidade Estadual Paulista,
Campus at S\~ao Jos\'e do Rio Preto, SP (Brazil)}
\email{cleonice.bracciali@unesp.br}

\address[A. M. Delgado]{Instituto de Matem\'aticas IMAG \&
Departamento de Ma\-te\-m\'{a}\-ti\-ca Aplicada, Facultad de Ciencias. Universidad de Granada (Spain)}
\email{amdelgado@ugr.es}

\address[L. Fern\'andez]{Instituto de Matem\'aticas IMAG \&
Departamento de Ma\-te\-m\'{a}\-ti\-ca Aplicada, Facultad de Ciencias. Universidad de Granada (Spain)}
\email{lidiafr@ugr.es}

\address[T. E. P\'erez]{Instituto
de Matem\'aticas IMAG \&
Departamento de Ma\-te\-m\'{a}\-ti\-ca Aplicada, Facultad de Ciencias. Universidad de Granada (Spain)}
\email{tperez@ugr.es}

\date{\today}

\begin{abstract}
Given a bivariate weight function defined on the positive quadrant of $\mathbb{R}^2$, we study polynomials in two variables orthogonal with respect to varying measures obtained by special modifications of this weight function. In particular, the varying weight functions are given by the multiplication of $x_1^{-n}x_2^{-n}$ times the original weight function. Apart from the question of the existence and construction of such kind of orthogonal polynomials, we show that the systems of bivariate polynomials orthogonal with respect to this kind of varying weights  satisfy R$_{II}$ type three term relations, one for every variable.
A method to construct bivariate orthogonal systems with respect to varying weights based in the Koornwinder's method is developed. Finally, several examples and particular cases have been analysed.
\end{abstract}

\subjclass[2020]{Primary: 42C05; 33C47}

\keywords{Bivariate orthogonal polynomials; varying weights; R$_{II}$ type three term relations}

\maketitle

%%%%%%%%%%%%%%%%%%%%%%%%%%%%%%%%%%%%%%%%%%%%%%%%%%%%%%
\section{Introduction}
%%%%%%%%%%%%%%%%%%%%%%%%%%%%%%%%%%%%%%%%%%%%%%%%%%%%%%%

Consider a positive measure $\psi(x)$ defined on the real interval $(a,b)$ and the sequence of monic orthogonal polynomials, $\{ P_n (x) \}_{n\geqslant0}$ that satisfy the orthogonality conditions
$$
\int_{a}^{b} x^k P_{n}(x) d \psi(x) = 0,
 \quad 0 \leqslant k \leqslant n-1.
$$
It is well known that the polynomials, $P_n (x),$ satisfy  the three term recurrence relation
$$
P_{n+1}(x) = (x-c_{n+1}) P_{n}(x) - \lambda_{n+1} \, P_{n-1}(x), \quad n \geqslant 0,
$$
with $P_{-1}(x)=0,$  $P_{0}(x)=1$, $\lambda_{n+1}\neq 0$, and that these polynomials are connected with continued fractions known as $J$-fractions, see Chihara \cite{Ch78}.

\bigskip

Within the scope of the connection between orthogonal polynomials and continued fractions Ismail and Masson in \cite{IM95} investigated what they call continued fractions of type R$_{I}$  and of type R$_{II}$.
The continued fractions of type R$_{I}$ are associated with sequences of polynomials that satisfy the R$_{I}$ type three term recurrence relation in the form
$$
P_{n+1}(x) = (x-c_{n+1}) P_{n}(x) - \lambda_{n+1} \, (x-a_{n+1})\, P_{n-1}(x), \quad n \geqslant 0,
$$
with $P_{-1}(x)=0,$  $P_{0}(x)=1,$ and
$\lambda_{n+1} \neq 0.$ If $ P_n(a_{k}) \neq 0,$ for $n \geqslant 1,$ $k=1,2,\ldots, n,$ then there is a linear functional $\mathcal{L}$ such that the rational functions $P_n(x)/\prod_{j=1}^{n}(x-a_k)$ satisfy the orthogonality conditions
$$
\mathcal{L} \left[ x^k \frac{P_n(x)}{\prod_{j=1}^{n}(x-a_k)} \right] = 0, \quad 0 \leqslant k \leqslant n-1.
$$

The special case of R$_{I}$ type three term recurrence relations when $a_{n+1}=0$ appeared in \cite{Ra94}, associated with polynomials that satisfy
\begin{equation} \label{OR_I}
\int_{a}^{b} x^{k} P_{n}(x) \frac{d \psi(x)}{x^n} = 0,
\quad 0 \leqslant k \leqslant n-1,
\end{equation}
where $0 \leqslant a < b \leqslant \infty$ and $\psi(x)$ is a positive measure such that
$$
\int_{a}^{b} x^{k} d \psi(x)  < \infty, \quad k= 0, \pm 1, \pm 2, \dots.
$$
%for $k= 0, \pm 1, \pm 2, \dots$. 
Similar polynomials have first appeared in \cite{JTW80}, related to orthogonal Laurent polynomials (see also \cite{HR86}).
In \cite{SR05}, another special case of R$_{I}$ type three term recurrence relation have been studied with applications to orthogonal polynomials on the unit circle.

\bigskip

On the other hand, continued fractions of type R$_{II}$ are associated with sequences of polynomials that satisfy the R$_{II}$ type three term recurrence relation
$$
P_{n+1}(x) = (x-c_{n+1}) P_{n}(x) - \lambda_{n+1} \, (x-a_{n+1})(x-b_{n+1})\, P_{n-1}(x), \quad n \geqslant 0,
$$
with $P_{-1}(x)=0,$  $P_{0}(x)=1,$ and
$\lambda_{n+1} \neq 0.$ If $P_n(a_{k}) \neq 0$ and $P_n(b_{k}) \neq 0,$ for $n \geqslant 1$ and $k=1,2,\ldots, n,$ there exists a linear functional $\mathcal{L}$ such that the rational functions $P_n(x)/\prod_{j=1}^{n}(x-a_k)(x-b_k)$ satisfy the orthogonality   conditions
$$
\mathcal{L} \left[ x^k \frac{P_n(x)}{\prod_{j=1}^{n}(x-a_k)(x-b_k)} \right] = 0, \quad 0 \leqslant k \leqslant n-1.
$$
Zhedanov in \cite{Zh99} studied the connection of these sequences of polynomials with biorthogonal rational functions and
the generalized eigenvalue problem.
The special case of R$_{II}$ type three term recurrence relation
$$
P_{n+1}(x) = (x-c_{n+1}) P_{n}(x) - \lambda_{n+1} \, (x^2+1) \, P_{n-1}(x), \quad n \geqslant 0,
$$
was studied by Ismail and Sri Ranga \cite{IR19} in connection with orthogonal polynomials on the unit circle. In \cite{IR19}  the authors have given some conditions for coefficients $c_{n+1}$ and $\lambda_{n+1}$ such that these polynomials satisfy
\begin{equation} \label{OR_II}
\int_{-\infty}^{\infty} x^{k} P_{n}(x) \frac{d \psi(x)}{(x^2+1)^n} = 0,  \quad 0 \leqslant k \leqslant n-1,
\end{equation}
where $\psi(x)$ is a positive measure.
\bigskip

Polynomials that satisfy \eqref{OR_I} or \eqref{OR_II} can be
seen as orthogonal polynomials with respect to varying measures. In the literature we can find many references on orthogonal polynomials with respect to varying measures, see \cite{BDMS99, CL98, To98}, and specially on varying measures defined on the unit circle, see \cite{ABMV05, Lo89, Pa94}.

\bigskip

The theory of orthogonal polynomials in one variable has been extended to several variables with many applications (see, for instance, \cite{DX14} and the references therein).
Hence, it seems reasonable to consider if it is possible to extend the R$_{I}$ or the R$_{II}$ type three term relation to several variables. This paper addresses the second relation for the bivariate case, see Theorem \ref{Theorem_TTRR}.

\bigskip

The goal of this paper is to extend results and properties about R$_{II}$ type three term relations for systems of bivariate polynomials that satisfy orthogonality conditions with respect to varying weights in two variables. In particular, we work with varying weight functions of two variables in the form
$$
W_n(x_1,x_2) = x_1^{-n} x_2^{-n}  W(x_1,x_2), \quad n  \geqslant 0,
$$
where $W(x_1,x_2)$ is a weight function defined on a domain $\Omega$ contained on the first quadrant of $\mathbb{R}^2$.
We define and prove the existence of bivariate orthogonal polynomials systems with respect to a varying weight. As main result, we found  R$_{II}$ type three term relations satisfied by these polynomials.
We provide methods to construct the polynomials: one method is based on the moment matrices, another method is inspired by the well-known Koornwinder’s method to construct bivariate orthogonal polynomials. Examples are also presented.

\bigskip

The paper is structured as follows. Section \ref{sec_tools}
brings the basic facts about bivariate polynomial systems.
In Section \ref{sec_R_II} we define varying weight functions of two variables and bivariate orthogonal systems with respect to varying weights. Besides, we prove the existence of these polynomials and how to construct them.
In Section \ref{sec_R_II_TTR} we obtain the R$_{II}$ type three term relations satisfied by the polynomials, as well as 
R$_{II}$ type three term recurrence relations.
In Section \ref{sec_Koorwinder} a method to construct bivariate orthogonal systems with respect to varying weights based in the Koornwinder's method is developed.
The last section brings several examples, in the first one the varying weight function in two variables is given by a product of one variable ones. In this case, the
associated polynomials are given by product of orthogonal polynomials with respect to the varying weight functions in one variable.
Second example uses the construction method developed on Section \ref{sec_Koorwinder}, where the domain $\Omega$ is a triangle  on the first quadrant of $\mathbb{R}^2$.
In the last example we analyse a varying weight function defined in a shifted simplex on $\mathbb{R}^2$.

%%%%%%%%%%%%%%%%%%%%%%%%%%%%%%%%%%%%%%%%%%%%%%%%%%%%%%%
\section{Basic tools and first results}
\label{sec_tools}
%%%%%%%%%%%%%%%%%%%%%%%%%%%%%%%%%%%%%%%%%%%%%%%%%%%%%%

We collect the basic tools and first results that we will need along this work. For more information, see \cite{DX14}.

A bivariate polynomial of total degree $n\geqslant 0$ with real coefficients is a linear combination of the monomials in two variables as
$$
p(x_1,x_2) = \sum_{i+j=0\atop i,j \geqslant 0}^n \,c_{i,j}\,x_1^i\,x_2^j, \qquad c_{i,j}\in \mathbb{R}.
$$
When there exists only one term of highest degree in the form
$$
p(x_1,x_2) = c_{n-k,k}\,x_1^{n-k}\,x_2^k + \sum_{i+j=0\atop i,j\geqslant0}^{n-1} \,c_{i,j}\,x_1^i\,x_2^j,
$$
we say that it is \emph{monomial}, and when $c_{n-k,k}=1$, the polynomial is called \emph{monic}.

Let us denote by $\Pi_n$ the linear space of bivariate polynomials in two variables of total degree less than or equal to $n$ with real coefficients. Then
$$
\Pi_n = \mathrm{span}\{1, x_1, x_2, x_1^2, x_1\,x_2,x_2^2, \ldots, x_1^n, x_1^{n-1}x_2, \ldots, x_2^n\},
$$
and 
\begin{equation}\label{def.tn}
	\dim \Pi_n = (n+2)(n+1)/2=t_n. 
\end{equation}	
Let $\Pi = \cup_{n\geqslant0}\Pi_n.$ 	
The canonical basis for $\Pi$ can be written as a sequence $\{\mathbb{X}_n\}_{n\geqslant0}$ of vectors of increasing size $n+1$ such that $\mathbb{X}_n$ contains the $n+1$ different monomials of total degree $n$ arranged in reverse lexicographical order, this is
%. Then, for $n\geqslant 0$, we get
\begin{equation}\label{def_Xpos}
\mathbb{X}_n = \begin{pmatrix}
x_1^n\\
x_1^{n-1}\, x_2\\
\vdots\\
x_1\, x_2^{n-1}\\
x_2^n
\end{pmatrix}, \quad n \geqslant 0.
\end{equation}
Observe that the set of entries of $\{\mathbb{X}_m\}_{m=0}^n$ forms a basis of $\Pi_n$.

Now, we define the vectors for negative indexes.

\begin{definition} For $n\geqslant1$, we define
\begin{equation*}
\mathbb{X}_{-n}  = x_1^{-n}x_2^{-n} \mathbb{X}_{n} = \begin{pmatrix}
	x_2^{-n}\\
	x_1^{-1}\, x_2^{-n+1}\\
	\vdots\\
	x_1^{-n+1}\, x_2^{-1}\\
	x_1^{-n}
\end{pmatrix}.
\end{equation*}
\end{definition}

Consider $L_{n,1}$ and $L_{n,2}$ matrices of size $(n+1)\times (n+2)$  given by
\begin{equation}\label{L_ni}
L_{n,1} = \left(\begin{array}{ccc|c}
1        &        & \bigcirc & 0     \\
         & \ddots &          & \vdots \\
\bigcirc &        & 1        & 0
\end{array}\right)
\quad \mbox{and} \quad
L_{n,2} = \left(\begin{array}{c|ccc}
0      & 1        &        & \bigcirc \\
\vdots &          & \ddots &          \\
0      & \bigcirc &        & 1
\end{array}\right).
\end{equation}
Observe that $L_{n,i}$ are full rank matrices,
such that $L_{n,i}\,L_{n,i}^T = I_{n+1}$. For $n \geqslant 0$, we get
\begin{equation*}
x_1\,\mathbb{X}_n = x_1\,\begin{pmatrix}
x_1^n\\
x_1^{n-1}\, x_2\\
\vdots\\
x_1\, x_2^{n-1}\\
x_2^n
\end{pmatrix} = \begin{pmatrix}
x_1^{n+1}\\
x_1^{n}\, x_2\\
\vdots\\
x_1^2\, x_2^{n-1}\\
x_1\,x_2^n
\end{pmatrix} = L_{n,1}\,\mathbb{X}_{n+1},
\end{equation*}
analogously, $x_2\,\mathbb{X}_n = L_{n,2}\,\mathbb{X}_{n+1}$. In addition, for $n \geqslant 1$,
\begin{equation*}
x_1^{-1}\,\mathbb{X}_{-n} = x_1^{-1} \begin{pmatrix}
x_2^{-n}\\
x_1^{-1}\, x_2^{-n+1}\\
\vdots\\
x_1^{-n+1}\, x_2^{-1}\\
x_1^{-n}
\end{pmatrix}= \begin{pmatrix}
x_1^{-1}x_2^{-n}\\
x_1^{-2}\, x_2^{-n+1}\\
\vdots\\
x_1^{-n}\, x_2^{-1}\\
x_1^{-n-1}
\end{pmatrix} = L_{n,2}\,\mathbb{X}_{-n-1},
\end{equation*}
and analogously, $x_2^{-1}\,\mathbb{X}_{-n} = L_{n,1}\,\mathbb{X}_{-n-1}.$

We can abbreviate above expressions in the form
\begin{equation*}
\mbox{for} \ \ n\geqslant1, \quad
x_i^{-1}\,\mathbb{X}_{-n} = L_{n,j}\,\mathbb{X}_{-(n+1)}, \quad i,j=1,2 \quad \mbox{and} \quad i \neq j.
\end{equation*}
also,
\begin{equation} \label{x12normal}
\mbox{for} \ \ n \geqslant 0, \quad
x_i \,\mathbb{X}_{n} = L_{n,i}\,\mathbb{X}_{n+1}, \quad i =1,2.  \qquad \qquad \qquad \qquad \
\end{equation}

\bigskip

A basis of $\Pi$ can be organized as sequences of polynomial vectors of increasing size whose entries are independent polynomials of the same total degree.

\begin{definition}[\cite{Ko82a, Ko82b}] A \emph{polynomial system (PS)} is a sequence of column polynomial vectors  $\{\mathbb{P}_n\}_{n\geqslant0}$ of increasing size $n+1$ in the form
$$
\mathbb{P}_n = \left(P_{n,0}(x_1,x_2), P_{n-1,1}(x_1,x_2), \ldots, P_{0,n}(x_1,x_2)\right)^T,
$$
such that every entry $P_{n-k,k}(x_1,x_2),$ $ k=0,1,\ldots,n,$ is a bivariate polynomial of total degree $n$, and $\{P_{n,0}(x_1,x_2), P_{n-1,1}(x_1,x_2), \ldots, P_{0,n}(x_1,x_2)\}$ is a linearly independent set.
\end{definition}

Observe that the set of entries of $\{\mathbb{P}_m\}_{m=0}^n$ is a basis of $\Pi_n$, and $\{\mathbb{X}_n\}_{n\geqslant0}$ defined as in \eqref{def_Xpos} is a PS. Therefore, every polynomials vector, $\mathbb{P}_n$, can be expressed as
\begin{equation}\label{expl_exp}
\mathbb{P}_n =
G_{n} \,\mathbb{X}_n + G_{n,n-1} \,\mathbb{X}_{n-1} +
 G_{n,n-2}\, \mathbb{X}_{n-2} + \cdots  + G_{n,1}\, \mathbb{X}_{1}  + G_{n,0} \, \mathbb{X}_{0},
\end{equation}
where $ G_{n,j}$ are matrices of size $(n+1) \times (j+1)$, and the square matrix $G_n=G_{n,n}$ is non-singular, and it is called the \emph{matrix leading coefficient of} $\mathbb{P}_n$.

Moreover, a PS is called \emph{monic} if every involved polynomial is monic as
\begin{equation*}\label{P_nk}
P_{n-k,k}(x_1,x_2) = x_1^{n-k}\,x_2^k + \sum_{i+j=0\atop i,j\geqslant0}^{n-1} \,c_{i,j}\,x_1^i\,x_2^j, \qquad c_{i,j}\in \mathbb{R}.
\end{equation*}
In that case, the matrix leading coefficient is the identity, $G_n = I_{n+1}$.

\bigskip

When computing a monic PS, every entry as in \eqref{P_nk} has $t_{n-1}= n + (n-1) + \cdots +2 +1$ coefficients $c_{i,j}\in\mathbb{R}$, and then,
the number of real coefficients in the monic polynomial vector
$\mathbb{P}_n$ is
\begin{equation} \label{number_coef}
t_{n-1} \times (n+1) = \frac{n(n+1)^2}{2}.
\end{equation}

%%%%%%%%%%%%%%%%%%%%%%%%%%%%%%%%%%%%%%%%%%%%%%%%%%%%%%%
\section{Bivariate varying orthogonality}
\label{sec_R_II}
%%%%%%%%%%%%%%%%%%%%%%%%%%%%%%%%%%%%%%%%%%%%%%%%%%%%%%%

Let $\Omega$ be a domain on the first quadrant of $\mathbb{R}^2$, i.e., $\Omega\subset \mathbb{R}^2_+ = \{(x_1, x_2)\in \mathbb{R}^2: x_1, x_2 > 0\}$, and let $W(x_1,x_2)$ be a weight function defined on $\Omega$ such that
\begin{equation*}
\mu_{k,m} = \iint_{\Omega}  x_1^{k} x_2^{m} \, W(x_1,x_2)\, dx_1\,dx_2 < \infty,
\end{equation*}
for $k,m =0, \pm 1, \pm 2, \ldots$. For every two polynomials $f, g \in \Pi$, we define the inner product
$$
\langle f,g\rangle = \iint_{\Omega}  f(x_1, x_2) \, g(x_1, x_2) \, W(x_1,x_2)\, dx_1\,dx_2.
$$

\bigskip

For a given weight function $ W(x_1,x_2)$, we are going to work with varying weight functions in the form
\begin{equation}\label{vw}
W_n(x_1,x_2) = x_1^{-n} x_2^{-n}  W(x_1,x_2),
\end{equation}
for $n\geqslant0$, that are positive on $\Omega\subset \mathbb{R}^2_+$. To this end, we start studying this kind of varying orthogonality.

Given a natural number $n\geqslant 0$, we define
\begin{align*}
\mu^{(n)}_{k,m} &=\iint_{\Omega} \, x_1^{k}\, x_2^{m} \, W_n(x_1,x_2)\, dx_1\,dx_2 \\
&= \iint_{\Omega} \, x_1^{-n+k}\, x_2^{-n+m} \, W(x_1,x_2)\, dx_1\,dx_2 = \mu_{-n+k,-n+m},
\end{align*}
for $k,m =0, \pm 1, \pm 2, \ldots$. Observe that we have defined the moments for the varying weight \eqref{vw}, and they  are well defined.

Associated with the varying weight $W_n(x_1,x_2)$, we define the $n$th varying inner product, in short \emph{VIP},
\begin{align}
\langle f,g\rangle_n &= \iint_{\Omega}  f(x_1, x_2) \, g(x_1, x_2) \, W_n(x_1,x_2)\, dx_1\,dx_2 \label{ip_n}\\
&= \iint_{\Omega}  f(x_1, x_2) \, g(x_1, x_2) \, x_1^{-n}\, x_2^{-n} \, W(x_1,x_2)\, dx_1\,dx_2.\nonumber
\end{align}
Obviously, $\langle \cdot,\cdot\rangle = \langle \cdot,\cdot\rangle_0$.

%%%%%%%%%%%%%%%%%%%%%%%%%%%%%%%%%%%%%%%%%%%%%%%%%%%%%%%%%%%
\subsection{Moment matrices}
%%%%%%%%%%%%%%%%%%%%%%%%%%%%%%%%%%%%%%%%%%%%%%%%%%%%%%%%%%%

For $n, r, s\geqslant 0$, we define the matrix $M_{r,s}^{(n)}$ of size $(r+1) \times (s+1)$, as
\begin{align}
M_{r,s}^{(n)} &= \langle \mathbb{X}_{r},\mathbb{X}_{s}^T\rangle_n =
\iint_{\Omega} \mathbb{X}_{r} \,\mathbb{X}_{s}^T  \, W_n(x_1,x_2)\, dx_1\,dx_2\nonumber\\
& =
\begin{pmatrix}
\mu^{(n)}_{r+s,0}  & \mu^{(n)}_{r+s-1,1} & \cdots & \mu^{(n)}_{r,s}\\[1ex]
\mu^{(n)}_{r+s-1,1}  & \mu^{(n)}_{r+s-2,2} & \cdots & \mu^{(n)}_{r-1,s+1}\\[1ex]
\vdots & \vdots &     &   \vdots \\[1ex]
\mu^{(n)}_{s,r}  & \mu^{(n)}_{s-1,r+1} & \cdots & \mu^{(n)}_{0,r+s}
\end{pmatrix}.\label{moment_s}
\end{align}
For $0\leqslant k\leqslant s$, we denote each column of $M_{r,s}^{(n)}$ by $\mathfrak{m}_{r,s,k}^{(n)}$, hence
\begin{equation*}
M_{r,s}^{(n)} = \begin{pmatrix}
\mathfrak{m}^{(n)}_{r,s,0} & \mathfrak{m}^{(n)}_{r,s,1} &  \cdots &  \mathfrak{m}_{r,s,s}^{(n)}
\end{pmatrix},
\end{equation*}
with
\begin{align}
\mathfrak{m}_{r,s,k}^{(n)} &=
\iint_{\Omega} \mathbb{X}_{r} \,x_1^{s-k}\,x_2^k  \,  W_n(x_1,x_2)\, dx_1\,dx_2
\nonumber \\
&= \begin{pmatrix}
\mu_{r+s-k,k}^{(n)} & \mu_{r+s-k-1,k+1}^{(n)} & \mu_{r+s-k-2,k+2}^{(n)} & \cdots & \mu_{s-k,k+r}^{(n)}\end{pmatrix}^T
\nonumber\\
&= \langle \mathbb{X}_{r}, x_1^{s-k}\,x_2^k\rangle_n.\label{columns_M}
\end{align}
Observe that, from the definition, the matrices $M_{r,s}^{(n)}$ satisfy
\begin{equation*}
M_{r,s}^{(n)} = (M_{s,r}^{(n)})^T.
\end{equation*}

Now we define the moment matrix $\mathcal{M}_n$ of size  $t_n \times t_n$, constructed by blocks $M_{r,s}^{(n)}$ for $0\leqslant r,s\leqslant n$, in the form
\begin{equation} \label{matrixMn}
\mathcal{M}_n =
\begin{pmatrix}
M_{0,0}^{(n)} & M_{0,1}^{(n)} & \cdots & M_{0,n}^{(n)} \\[1ex]
M_{1,0}^{(n)} & M_{1,1}^{(n)} & \cdots & M_{1,n}^{(n)} \\[1ex]
\vdots & \vdots &    & \vdots \\[1ex]
M_{n,0}^{(n)} & M_{n,1}^{(n)} & \cdots & M_{n,n}^{(n)}
\end{pmatrix},
\end{equation}
where $t_n$ was defined in \eqref{def.tn}.
\bigskip

Some examples  of $\mathcal{M}_n$,
$$
\mathcal{M}_0 = (M_{0,0}^{(0)}) = ( \mu_{0,0})
\quad \quad \mbox{of size} \ t_0 \times t_0 = 1 \times 1,
$$
$$
\mathcal{M}_1 = \begin{pmatrix}
M_{0,0}^{(1)} & M_{0,1}^{(1)}  \\[1ex]
M_{1,0}^{(1)} & M_{1,1}^{(1)}
\end{pmatrix}
=
\left(
\begin{array}{c|cc}
\mu^{(1)}_{0,0} & \mu^{(1)}_{1,0} & \mu^{(1)}_{0,1}     \\  \hline
\mu^{(1)}_{1,0}  & \mu^{(1)}_{2,0} & \mu^{(1)}_{1,1}     \\
\mu^{(1)}_{0,1}  & \mu^{(1)}_{1,1}  & \mu^{(1)}_{0,2}
\end{array}
\right)
\quad
$$
of size $t_1 \times t_1 = 3 \times 3$, and $\mathcal{M}_2$ of size  $t_2 \times t_2 = 6 \times 6$ is
\begin{align*}
\mathcal{M}_2 =
\begin{pmatrix}
M_{0,0}^{(2)} & M_{0,1}^{(2)} & M_{0,2}^{(2)} \\[1ex]
M_{1,0}^{(2)} & M_{1,1}^{(2)} & M_{1,2}^{(2)}  \\[1ex]
M_{2,0}^{(2)} & M_{2,1}^{(2)} & M_{2,2}^{(2)}
\end{pmatrix}
=\left(
\begin{array}{c|cc|ccc}
\mu^{(2)}_{0,0} & \mu^{(2)}_{1,0} & \mu^{(2)}_{0,1} & \mu^{(2)}_{2,0} & \mu^{(2)}_{1,1} & \mu^{(2)}_{0,2}  \\ \hline
\mu^{(2)}_{1,0} & \mu^{(2)}_{2,0} & \mu^{(2)}_{1,1 }& \mu^{(2)}_{3,0} & \mu^{(2)}_{2,1} & \mu^{(2)}_{1,2} \\
\mu^{(2)}_{0,1} & \mu^{(2)}_{1,1} & \mu^{(2)}_{0,2} & \mu^{(2)}_{2,1} & \mu^{(2)}_{1,2} & \mu^{(2)}_{0,3}  \\ \hline
\mu^{(2)}_{2,0} & \mu^{(2)}_{3,0} & \mu^{(2)}_{2,1} & \mu^{(2)}_{4,0} & \mu^{(2)}_{3,1} & \mu^{(2)}_{2,2}  \\
\mu^{(2)}_{1,1} & \mu^{(2)}_{2,1} & \mu^{(2)}_{1,2} & \mu^{(2)}_{3,1} & \mu^{(2)}_{2,2} & \mu^{(2)}_{1,3} \\
\mu^{(2)}_{0,2} & \mu^{(2)}_{1,2} & \mu^{(2)}_{0,3} & \mu^{(2)}_{2,2} & \mu^{(2)}_{1,3} & \mu^{(2)}_{0,4}
\end{array}
\right).
\end{align*}
From $\mathcal{M}_n$, we define the matrix  $\widehat{\mathcal{M}}_{n}$  of size $t_{n-1} \times t_{n-1}$ by deleting the last block row and column
$$
\widehat{\mathcal{M}}_{n}=
\begin{pmatrix}
M_{0,0}^{(n)} & M_{0,1}^{(n)} & \cdots & M_{0,n-1}^{(n)}\\[1ex]
M_{1,0}^{(n)} & M_{1,1}^{(n)} & \cdots & M_{1,n-1}^{(n)}\\[1ex]
\vdots & \vdots &   & \vdots   \\[1ex]
M_{n-1,0}^{(n)} & M_{n-1,1}^{(n)} & \cdots & M_{n-1,n-1}^{(n)}
\end{pmatrix}.
$$

We observe that $\mathcal{M}_n$ is the $n$th block moment matrix, and $\widehat{\mathcal{M}}_{n}$ in the $(n-1)$th block moment matrix associated with the weight function $W_n(x_1, x_2) = x_1^{-n}\,x_2^{-n}\,W(x_1, x_2)$. Consequently, they are symmetric and positive definite matrices (\cite[p. 64]{DX14}).

Hence
$$
\mathcal{M}_n =
\left( \begin{array}{cccc|c}
 & & & &  M_{0,n}^{(n)} \\[1ex]
 & & \widehat{\mathcal{M}}_{n} & &  M_{1,n}^{(n)} \\
 & & & &  \vdots  \\
 & & & &   M_{n-1,n}^{(n)}  \\ \hline
M_{n,0}^{(n)} & M_{n,1}^{(n)} &  \cdots & M_{n,n-1}^{(n)} & M_{n,n}^{(n)}
\end{array} \right).
$$

%%%%%%%%%%%%%%%%%%%%%%%%%%%%%%%%%%%%%%%%%
\subsection{Orthogonal polynomial systems with respect to varying weights}
%%%%%%%%%%%%%%%%%%%%%%%%%%%%%%%%%%%%

Our objective is to construct a PS $\{\mathbb{P}_n\}_{n\geqslant0}$ satisfying orthogonality conditions by using varying weights. Let $n\geqslant 0$, then for $k=0,1,\ldots, n-1$,
\begin{align}
\langle \mathbb{X}_{k},\mathbb{P}_n^T\rangle_n &=\iint_{\Omega}  \mathbb{X}_{k} \,\mathbb{P}_n^T \,W_n(x_1,x_2)\, dx_1\,dx_2  \nonumber \\
&=
\iint_{\Omega}  \mathbb{X}_{k} \,\mathbb{P}_n^T \,x_1^{-n}\, x_2^{-n} \,W(x_1,x_2)\, dx_1\,dx_2 = \mathtt{0}_{(k+1) \times (n+1)}, \label{RII_ort_con}
\end{align}
where $\mathtt{0}_{(k+1) \times (n+1)}$ denotes a zero matrix of size $(k+1) \times (n+1)$, and
\begin{align}\label{k=n}
\langle \mathbb{X}_{n},\mathbb{P}_n^T\rangle_n =  \Lambda_n,
\end{align}
where  $\Lambda_n$ is a non-singular real matrix of size $(n+1) \times (n+1)$.

\begin{definition}
A PS satisfying the varying orthogonality conditions \eqref{RII_ort_con}-\eqref{k=n} will be called a polynomial system orthogonal with respect to the varying weight $W_n(x_1, x_2)$, in short \emph{VOPS}.
\end{definition}

\medskip

%%%%%%%%%%%%%%%%%%%%%%%%%%%%%%%%%%%%%%%%%
\subsection{Existence of VOPS}
%%%%%%%%%%%%%%%%%%%%%%%%%%%%%%%%%%%%

From \eqref{expl_exp}, we get
$$
\mathbb{P}_n^T = \mathbb{X}_{0}^T G_{n,0}^T + \mathbb{X}_{1}^T G_{n,1}^T + \cdots  + \mathbb{X}_{n-1}^T G_{n,n-1}^T
+ \mathbb{X}_n ^T  G_n^T = \sum_{i=0}^n\,\mathbb{X}_{i}^T G_{n,i}^T,
$$
and, using the orthogonality conditions \eqref{RII_ort_con} and \eqref{k=n}, making $\langle \mathbb{X}_{k},\,\mathbb{P}_{n}^T\rangle_n$ for $k= 0,1, \cdots n-1, n$,  we can write
\begin{align*}
& \sum_{i=0}^n\langle \mathbb{X}_{0},\,\mathbb{X}_{i}^T\rangle_n\, G_{n,i}^T = \mathtt{0}_{1 \times (n+1)} \\
& \sum_{i=0}^n\langle \mathbb{X}_{1},\,\mathbb{X}_{i}^T\rangle_n\, G_{n,i}^T = \mathtt{0}_{2 \times (n+1)} \\
& \qquad \vdots \\
& \sum_{i=0}^n\langle \mathbb{X}_{n-1},\,\mathbb{X}_{i}^T\rangle_n\, G_{n,i}^T = \mathtt{0}_{n \times (n+1)} \\
& \sum_{i=0}^n\langle \mathbb{X}_{n},\,\mathbb{X}_{i}^T\rangle_n\, G_{n,i}^T =   \Lambda_n.
\end{align*}
Using \eqref{matrixMn} we get the matrix system
$$
\begin{pmatrix}
M_{0,0}^{(n)} & M_{0,1}^{(n)} & \cdots & M_{0,n}^{(n)} \\[1ex]
M_{1,0}^{(n)} & M_{1,1}^{(n)} & \cdots & M_{1,n}^{(n)} \\[1ex]
\vdots & \vdots &    & \vdots \\[1ex]
M_{n,0}^{(n)} & M_{n,1}^{(n)} & \cdots & M_{n,n}^{(n)}
\end{pmatrix}
\begin{pmatrix}
G_{n,0}^T \\[2ex]
G_{n,1}^T \\[2ex]
\vdots \\
G_{n}^T
\end{pmatrix}
=
\begin{pmatrix}
\mathtt{0}_{1 \times (n+1)}  \\[1ex]
\mathtt{0}_{2 \times (n+1)}  \\[1ex]
\vdots \\[1ex]
\Lambda_n
\end{pmatrix},
$$
whose coefficient matrix is $\mathcal{M}_n$, the moment matrix. Observe that $G_{n,k}^T$, for $k= 0, 1, \ldots n$ are unknown matrices of size $(k+1)\times (n+1)$. Then we have unique solution since the coefficient matrix is non-singular.

\medskip

Next result shows how to construct the monic VOPS.

\begin{theorem} \label{3.2}
For $n\geqslant0$ and $k=0,1,\ldots,n$, the entries of every vector $\mathbb{P}_n$ are given by
$$
P_{n-k,k}(x_1,x_2) = \frac{1}{ \det( \widehat{\mathcal{M}}_{n} ) }
\left| \begin{array}{cccc|c}
 & &  & &  \mathfrak{m}_{0,n,k}^{(n)}  \\
 & & \widehat{\mathcal{M}}_{n} & &  \mathfrak{m}_{1,n,k}^{(n)}  \\
 & &  &  & \vdots \\
 & & & &  \mathfrak{m}_{n-1,n,k}^{(n)}   \\ \hline
\mathbb{X}_{0}^T & \mathbb{X}_{1}^T & \cdots & \mathbb{X}_{n-1}^T    & x_1^{n-k} x_2^{k}
\end{array}\right|,
$$
where $
\mathfrak{m}_{r,n,k}^{(n)}
$
was defined in \eqref{columns_M}.
\end{theorem}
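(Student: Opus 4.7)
The plan is to verify directly that the polynomial defined by the right-hand side of the formula satisfies the three properties that characterise the entry $P_{n-k,k}$ of the monic VOPS, namely: (i) it is a polynomial in $(x_1,x_2)$ of total degree $n$, (ii) it is monic of the form \eqref{P_nk}, with leading monomial $x_1^{n-k}x_2^k$, and (iii) it satisfies the varying orthogonality conditions \eqref{RII_ort_con}. Once these are established, the uniqueness of the monic VOPS (which follows from the invertibility of $\mathcal{M}_n$ proved in the existence subsection) forces the determinantal expression to coincide with $P_{n-k,k}$.

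First I would expand the determinant by cofactors along the last row. Only the last row carries the variables $(x_1,x_2)$, and every one of its entries is a monomial. The entry $x_1^{n-k}x_2^k$ at the bottom-right contributes $x_1^{n-k}x_2^k\cdot\det(\widehat{\mathcal{M}}_n)$, while every other entry $x_1^{r-i}x_2^i$ with $r\leqslant n-1$ contributes a term of total degree $r\leqslant n-1$. Since $\widehat{\mathcal{M}}_n$ is symmetric and positive definite, $\det(\widehat{\mathcal{M}}_n)\neq 0$, so dividing by this determinant yields a polynomial of total degree exactly $n$ whose unique top-degree term is $x_1^{n-k}x_2^k$ with coefficient $1$. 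This settles (i) and (ii).

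The substantive step is (iii). Fix $r\in\{0,1,\ldots,n-1\}$ and $i\in\{0,1,\ldots,r\}$; I must check that $\langle x_1^{r-i}x_2^i,P_{n-k,k}\rangle_n=0$. By linearity of $\langle\cdot,\cdot\rangle_n$ in its second argument together with multilinearity of the determinant in its last row, the inner product can be pulled inside the determinant and applied entrywise in that last row. Each monomial entry $x_1^{s-j}x_2^j$ of $\mathbb{X}_s^T$ (with $s\leqslant n-1$) is thereby replaced by $\langle x_1^{r-i}x_2^i,x_1^{s-j}x_2^j\rangle_n=\mu^{(n)}_{r+s-i-j,\,i+j}$, and the corner entry $x_1^{n-k}x_2^k$ is replaced by $\langle x_1^{r-i}x_2^i,x_1^{n-k}x_2^k\rangle_n$. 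Comparing with \eqref{moment_s} and \eqref{columns_M}, the resulting last row coincides exactly with the $i$-th row of the block row $\bigl(M_{r,0}^{(n)}\ M_{r,1}^{(n)}\ \cdots\ M_{r,n-1}^{(n)}\ \mathfrak{m}_{r,n,k}^{(n)}\bigr)$, which is already one of the rows of the displayed determinant because $r\leqslant n-1$. Hence the modified determinant has two identical rows and vanishes, giving the desired orthogonality.

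The main obstacle I anticipate is purely organisational: carefully matching the index $(r,i)$ of the test monomial $x_1^{r-i}x_2^i$ with the correct row inside the block $M_{r,s}^{(n)}$, under the conventions for $\mathbb{X}_r$ in \eqref{def_Xpos} and for the columns $\mathfrak{m}_{r,s,k}^{(n)}$ in \eqref{columns_M}. No new analytic ingredient is needed; the argument reduces to a row-identification inside a block-structured determinant, after which uniqueness closes the proof.
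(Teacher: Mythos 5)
Your proposal is correct and follows essentially the same route as the paper's proof: verify monicity and degree by expanding along the last row, then apply $\langle x_1^{r-i}x_2^i,\cdot\rangle_n$ to that row and observe it reproduces a row of $\widehat{\mathcal{M}}_{n}$ (augmented by $\mathfrak{m}_{r,n,k}^{(n)}$), so the determinant vanishes. You simply spell out the multilinearity and row-identification details that the paper leaves implicit, and add the (harmless) appeal to uniqueness of the monic VOPS.
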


\begin{proof}

Obviously, $P_{n-k,k}(x_1,x_2)$ is a monic polynomial whose highest degree term is $x_1^{n-k} x_2^{k}$. Computing
$
\langle x_1^{m-j}\,x_2^j, P_{n-k,k}\rangle_n
$,
for $m<n$, and $0\leqslant j\leqslant m$, the last row of the the determinant coincides with another row above, therefore the determinant is zero, and the orthogonality holds.

\end{proof}

%%%%%%%%%%%%%%%%%%%%%%%%%%%%%%%%%%%%%%%%%%%%%%%%%%%%%%%
\section{R$_{II}$ type three term relations}
\label{sec_R_II_TTR}
%%%%%%%%%%%%%%%%%%%%%%%%%%%%%%%%%%%%%%%%%%%%%%%

We can now prove that the VOPS $\{\mathbb{P}_{n}\}_{n\geqslant0}$ that satisfy the orthogonality conditions \eqref{RII_ort_con}-\eqref{k=n}, also satisfy three term relations.

\begin{theorem}
Let $\{\mathbb{P}_{n}\}_{n\geqslant0}$ be a VOPS associated with the VIP \eqref{ip_n}. For $n\geqslant0$ and $i=1,2$, there exist matrices $A_{n,i}$, $B_{n,i}$, $C_{n,i}$, of respective sizes $(n+1) \times (n+2)$, $(n+1) \times (n+1)$ and $(n+1) \times n$, such that the following R$_{II}$ type three term relations hold
\begin{equation} \label{TTR_R_II}
x_i \,\mathbb{P}_n  = A_{n,i} \,\mathbb{P}_{n+1}
+ B_{n,i} \, \mathbb{P}_{n}
+ x_{1} \, x_{2} \, C_{n,i} \, \mathbb{P}_{n-1},  \quad i=1,2, \ n \geqslant 0,
\end{equation}
with $\mathbb{P}_{-1} = 0$, $\mathbb{P}_0 = (1),$
if and only if, for $n \geqslant 1$,
\begin{align}
&A_{n,i}\, G_{n+1}  =  G_{n}\, L_{n,i} - C_{n,i}\, G_{n-1}\, L_{n-1,1}\, L_{n,2}, \label{AGCL} \\[1ex]
&L_{n-1,1}\, L_{n,2}\, \Lambda_{n+1} \,A_{n,i}^{T}
= L_{n-1,i} \,\Lambda_{n}
- \Lambda_{n-1} \,C_{n,i}^{T},\label{RRC}
\end{align}
where $G_n$ is the leading coefficient matrix of $\mathbb{P}_n$, $L_{n,i}$ and $\Lambda_{n}$ are defined in \eqref{L_ni} and \eqref{RII_ort_con}, respectively.

Moreover, in this case, we have that
\begin{equation}
B_{n,i} \,\Upsilon_{n}^T  =  - C_{n,i}\, \Upsilon_{n-1}^T,
\quad n \geqslant 0,   \label{BLCL}
\end{equation}
for $i=1,2$, where
$$
\Upsilon_n = \langle \mathbb{X}_{-1},\mathbb{P}_n^T\rangle_n, \quad n \geqslant 0.
$$
\end{theorem}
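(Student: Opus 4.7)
The plan is to split the bidirectional characterization into necessity and sufficiency, and to handle the additional identity \eqref{BLCL} separately.

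For the direction $(\Rightarrow)$, assuming \eqref{TTR_R_II}, I would extract two matrix identities. Writing each $\mathbb{P}_k$ as $G_k \mathbb{X}_k + (\text{lower degree})$ and comparing the degree $n+1$ leading coefficients on both sides---using $x_i \mathbb{X}_n = L_{n,i}\mathbb{X}_{n+1}$ from \eqref{x12normal} and its iterate $x_1 x_2 \mathbb{X}_{n-1} = L_{n-1,1} L_{n,2} \mathbb{X}_{n+1}$---immediately yields \eqref{AGCL}. For \eqref{RRC} I would test the relation by forming $\langle \mathbb{X}_{n-1},(\cdot)^T\rangle_n$ on both sides. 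On the left, $\langle \mathbb{X}_{n-1},(x_i\mathbb{P}_n)^T\rangle_n$ simplifies to $L_{n-1,i}\Lambda_n$ by moving $x_i$ onto $\mathbb{X}_{n-1}$. On the right, the $B_{n,i}\mathbb{P}_n$ term vanishes by orthogonality of $\mathbb{P}_n$ to $\Pi_{n-1}$ under $W_n$; the $\mathbb{P}_{n+1}$ term, using $W_n = x_1 x_2 W_{n+1}$ together with the iterated identity above, collapses to $L_{n-1,1} L_{n,2}\Lambda_{n+1}A_{n,i}^T$; and the $x_1 x_2 C_{n,i}\mathbb{P}_{n-1}$ term becomes $\Lambda_{n-1}C_{n,i}^T$ via $x_1 x_2 W_n = W_{n-1}$. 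Equating yields \eqref{RRC}.

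For the converse $(\Leftarrow)$, given $A_{n,i}$ and $C_{n,i}$ satisfying \eqref{AGCL}--\eqref{RRC}, I would define the residual $R_{n,i} = x_i\mathbb{P}_n - A_{n,i}\mathbb{P}_{n+1} - x_1 x_2 C_{n,i}\mathbb{P}_{n-1}$ and show $R_{n,i} = B_{n,i}\mathbb{P}_n$ for a uniquely determined matrix $B_{n,i}$. Condition \eqref{AGCL} forces the degree $n+1$ parts of $R_{n,i}$ to cancel, so $R_{n,i}$ has total degree at most $n$ in each entry. Repeating the computation of the previous paragraph for $\langle \mathbb{X}_j, R_{n,i}^T\rangle_n$, the term-by-term identifications give vanishing for every $j \leq n-2$ from the orthogonalities of $\mathbb{P}_{n-1}$, $\mathbb{P}_n$ and $\mathbb{P}_{n+1}$ under their respective varying weights, while the case $j = n-1$ is precisely the content of \eqref{RRC}. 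Hence each entry of $R_{n,i}$ lies in the orthogonal complement of $\Pi_{n-1}$ inside $\Pi_n$ under $\langle \cdot,\cdot\rangle_n$, a subspace of dimension $n+1$ spanned by the entries of $\mathbb{P}_n$; this produces a unique $(n+1)\times(n+1)$ matrix $B_{n,i}$ with $R_{n,i} = B_{n,i}\mathbb{P}_n$, i.e., \eqref{TTR_R_II}.

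For \eqref{BLCL}, I would test \eqref{TTR_R_II} on the right by $\mathbb{X}_{-1}^T$ integrated against $W_n$. The two algebraic identities that drive the computation are $\mathbb{X}_{-1} W_n = \mathbb{X}_1 W_{n+1}$ and $x_1 x_2 \mathbb{X}_{-1} = \mathbb{X}_1$. Under these, the $A_{n,i}\mathbb{P}_{n+1}$ contribution becomes $A_{n,i}\langle\mathbb{X}_1,\mathbb{P}_{n+1}^T\rangle_{n+1}^T$, which vanishes for $n \geq 1$ by the orthogonality of $\mathbb{P}_{n+1}$ against $\mathbb{X}_1$ under $W_{n+1}$; the $x_1 x_2 C_{n,i}\mathbb{P}_{n-1}$ contribution becomes $C_{n,i}\Upsilon_{n-1}^T$; and the $B$ term is $B_{n,i}\Upsilon_n^T$ by definition of $\Upsilon_n$. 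The main obstacle I anticipate is to show that the left-hand side $\iint x_i \mathbb{P}_n \mathbb{X}_{-1}^T W_n\,dx_1\,dx_2$ also vanishes for $n \geq 1$. Using $x_i \mathbb{X}_{-1}^T W_n = (x_i\mathbb{X}_1)^T W_{n+1}$ together with $x_i\mathbb{X}_1 = L_{1,i}\mathbb{X}_2$ rewrites this integral as $\langle\mathbb{X}_2,\mathbb{P}_n^T\rangle_{n+1}^T L_{1,i}^T$; its vanishing is the key algebraic step and likely follows by combining the orthogonality of $\mathbb{P}_n$ to $\mathbb{X}_0$ under $W_n$ (which zeroes out the middle row of $\langle\mathbb{X}_2,\mathbb{P}_n^T\rangle_{n+1}$, since the middle entry of $\mathbb{X}_2$ is $x_1 x_2$) with a further application of the three-term relation itself to handle the remaining two rows.
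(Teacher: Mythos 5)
Your treatment of the equivalence \eqref{TTR_R_II} $\Leftrightarrow$ \eqref{AGCL}--\eqref{RRC} is correct and follows essentially the same route as the paper: \eqref{AGCL} cancels the degree-$(n+1)$ part of the residual, testing against $\mathbb{X}_k$ for $k\leqslant n-2$ uses only the orthogonality of $\mathbb{P}_{n-1}$, $\mathbb{P}_n$, $\mathbb{P}_{n+1}$ with respect to $W_{n-1}$, $W_n$, $W_{n+1}$, and the case $k=n-1$ is exactly \eqref{RRC}; the paper finishes by solving a homogeneous linear system whose matrix is the nonsingular $\widehat{\mathcal{M}}_{n}$, which is the same argument as your orthogonal-complement count.

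The gap is in \eqref{BLCL}, and you have put your finger on precisely the right spot. The identity $\langle x_i\mathbb{X}_{-1},\mathbb{P}_n^T\rangle_n = L_{1,i}\,\langle\mathbb{X}_2,\mathbb{P}_n^T\rangle_{n+1}$ is correct, and the middle row $\langle x_1x_2,\mathbb{P}_n^T\rangle_{n+1}=\langle 1,\mathbb{P}_n^T\rangle_n$ does vanish for $n\geqslant1$; but the surviving row for $i=1$ is $\langle x_1^2,\mathbb{P}_n^T\rangle_{n+1}=\langle x_1x_2^{-1},\mathbb{P}_n^T\rangle_n$, and this is \emph{not} one of the orthogonality conditions \eqref{RII_ort_con}, which only give $\langle x_1^{a}x_2^{b},\mathbb{P}_n^T\rangle_n=0$ for monomials with $a,b\geqslant0$ and $a+b\leqslant n-1$. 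Your proposed rescue (``a further application of the three-term relation itself'') is circular: applying $\langle\mathbb{X}_{-1},\cdot\,\rangle_n$ to \eqref{TTR_R_II} returns exactly $\Upsilon_nB_{n,i}^T+\Upsilon_{n-1}C_{n,i}^T=\langle x_i\mathbb{X}_{-1},\mathbb{P}_n^T\rangle_n$, so you would be assuming what you want to prove. Be aware that the paper's own argument does not close this point either: it substitutes $k=-1$ into \eqref{intint_2} and reads the second term as $-L_{-1,i}\langle\mathbb{X}_0,\mathbb{P}_n^T\rangle_n=0$, which tacitly uses ``$x_i\mathbb{X}_{-1}=L_{-1,i}\mathbb{X}_0$''; that identity fails, since $x_1\mathbb{X}_{-1}=(x_1x_2^{-1},\,1)^T$ is not a constant matrix times $\mathbb{X}_0$. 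Indeed, in the paper's rectangle example with $n=1$ and $i=1$ the offending row equals
\begin{equation*}
\Bigl(\int(x_1-1)\,w_1\,dx_1\int x_2^{-2}\,w_2\,dx_2,\ \int w_1\,dx_1\int(x_2-2)\,x_2^{-2}\,w_2\,dx_2\Bigr)\neq(0,0),
\end{equation*}
so only the row of \eqref{BLCL} paired with the entry $x_i^{-1}$ of $\mathbb{X}_{-1}$ (the second row for $i=1$, the first for $i=2$) is actually established. You should either prove only that single row or identify an extra hypothesis under which the remaining row holds; as written, neither your argument nor the paper's yields the full matrix identity \eqref{BLCL}.
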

\begin{proof} For $n\geqslant0$ and $i=1,2$, let $A_{n,i}$ and $C_{n,i}$ be $(n+1) \times (n+2)$ and $(n+1) \times n$ matrices, respectively, satisfying \eqref{AGCL}. We define the $(n+1) \times 1$ polynomial vector
$$
\mathbb{T}_i= A_{n,i} \,\mathbb{P}_{n+1}   - x_i \,\mathbb{P}_n +  x_{1} \, x_{2} \, C_{n,i} \, \mathbb{P}_{n-1},
$$
of degree $n+1$. From the explicit expression \eqref{expl_exp}, we can write
\begin{equation*}
 \mathbb{T}_i = A_{n,i} \sum_{k=0}^{n+1}  G_{n+1,k} \, \mathbb{X}_{k}
  - x_i  \sum_{k=0}^{n}  G_{n,k} \, \mathbb{X}_{k}
  + x_1 \, x_2   C_{n,i} \sum_{k=0}^{n-1} G_{n-1,k} \, \mathbb{X}_{k}.
\end{equation*}
Using \eqref{x12normal}, we get
\begin{equation*}
\begin{aligned}
 \mathbb{T}_i =& A_{n,i} \sum_{k=0}^{n+1}  G_{n+1,k} \mathbb{X}_{k}
  - \sum_{k=0}^{n}  G_{n,k} L_{k,i} \mathbb{X}_{k+1}
  +  C_{n,i} \sum_{k=0}^{n-1}  G_{n-1,k} L_{k,1} L_{k+1,2} \mathbb{X}_{k+2} \\[1ex]
 = &\left[ A_{n,i} G_{n+1} - G_{n}L_{n,i}  +  C_{n,i}  G_{n-1} L_{n-1,1} L_{n,2} \right] \mathbb{X}_{n+1} \\
 + &
 A_{n,i} \sum_{k=0}^{n}  G_{n+1,k} \mathbb{X}_{k}
  - \sum_{k=0}^{n-1}  G_{n,k} L_{k,i} \mathbb{X}_{k+1}
  +  C_{n,i} \sum_{k=0}^{n-2}  G_{n-1,k} L_{k,1} L_{k+1,2} \mathbb{X}_{k+2}.
\end{aligned}
\end{equation*}
Then, from \eqref{AGCL}, $ \mathbb{T}_i$ has degree $n$. Therefore, since $\{\mathbb{P}_k\}_{k=0}^n$ is a basis of $\Pi_n$, there exist matrices $F_{k,i}$ of size  $(n+1)  \times (k+1)$, $i=1,2$, such that
$$
\mathbb{T}_i = \sum_{k=0}^{n} F_{k,i} \mathbb{P}_k.
$$
Now we denote $F_{n,i} = - B_{n,i}$, hence
$$
\mathbb{T}_i = A_{n,i} \,\mathbb{P}_{n+1}  - x_i \, \mathbb{P}_n
+ x_1 \, x_2 \, C_{n,i} \, \mathbb{P}_{n-1}
= - B_{n,i} \mathbb{P}_n + \sum_{k=0}^{n-1} F_{k,i} \mathbb{P}_k,
$$
and therefore we can write
\begin{equation}\label{eq_Q}
A_{n,i} \,\mathbb{P}_{n+1}  - x_i \, \mathbb{P}_n
+ B_{n,i} \mathbb{P}_n
+ x_1 \, x_2 \, C_{n,i} \, \mathbb{P}_{n-1}
= \mathbb{Q}_{n-1,i}
\end{equation}
where
$$
\mathbb{Q}_{n-1,i} = \sum_{k=0}^{n-1} F_{k,i} \mathbb{P}_k
$$
is a polynomial vector of degree $n-1$ and of size $(n+1) \times 1$. Our objective is to prove that $ \mathbb{Q}_{n-1,i}$ vanishes.

Using the transpose of equation \eqref{eq_Q}, multiplying it by $\mathbb{X}_{k}$, for $k=0,1,\ldots,n-1$, and applying the $n$th VIP, we get
\begin{align*}
\langle \mathbb{X}_{k}, \mathbb{P}_{n+1}^{T} \rangle_n \, A_{n,i}^{T}
- \langle \mathbb{X}_{k}, x_i \, \mathbb{P}_n^{T}\rangle_n  & + \langle \mathbb{X}_{k}, \mathbb{P}_n^{T}\rangle_n\, B_{n,i}^{T}\\
& + \langle \mathbb{X}_{k}, x_1 \, x_2 \, \mathbb{P}_{n-1}^{T}\rangle_n \,  C_{n,i}^{T} = \langle \mathbb{X}_{k}, \mathbb{Q}_{n-1,i}^{T} \rangle_n.
\end{align*}
Taking into account that $x_1^{-1}\,x_2^{-1}\,W_n = W_{n+1}$, and $x_1\,x_2\,W_n = W_{n-1}$, we have
\begin{align*}
\langle x_1\,x_2\,\mathbb{X}_{k}, \mathbb{P}_{n+1}^{T} \rangle_{n+1} \, A_{n,i}^{T}
& - \langle x_i \,\mathbb{X}_{k}, \mathbb{P}_n^{T}\rangle_n   + \langle \mathbb{X}_{k}, \mathbb{P}_n^{T}\rangle_n\, B_{n,i}^{T}\\
& + \langle \mathbb{X}_{k}, \mathbb{P}_{n-1}^{T}\rangle_{n-1} \,  C_{n,i}^{T} = \langle \mathbb{X}_{k}, \mathbb{Q}_{n-1,i}^{T} \rangle_n,
\end{align*}
and from \eqref{x12normal} we can write
\begin{equation} \label{intint_2}
\begin{aligned}
 L_{k,1} L_{k+1,2}\langle & \mathbb{X}_{k+2}, \mathbb{P}_{n+1}^{T} \rangle_{n+1}  A_{n,i}^{T}  - L_{k,i} \langle \mathbb{X}_{k+1}, \mathbb{P}_n^{T}\rangle_n  \\[1ex]
& + \langle \mathbb{X}_{k}, \mathbb{P}_n^{T}\rangle_n\, B_{n,i}^{T}
+ \langle \mathbb{X}_{k}, \mathbb{P}_{n-1}^{T}\rangle_{n-1} \,  C_{n,i}^{T}
 = \langle \mathbb{X}_{k}, \mathbb{Q}_{n-1,i}^{T} \rangle_n.
\end{aligned}
\end{equation}
Using \eqref{RII_ort_con}, we observe that all terms in the left hand side vanish for  $k=  0,1,\ldots,n-2$, and then
$$
 \langle \mathbb{X}_{k}, \mathbb{Q}_{n-1,i}^{T} \rangle_n = \mathtt{0}_{(k+1) \times (n+1)}.
$$

\bigskip

Substituting $k=n-1$ in \eqref{intint_2} and using \eqref{RII_ort_con}  and \eqref{k=n},  we obtain
$$
  L_{n-1,1}\, L_{n,2} \,\Lambda_{n+1}\, A_{n,i}^{T}
- L_{n-1,i}\, \Lambda_{n}
+ \Lambda_{n-1}\, C_{n,i}^{T} =
 \langle \mathbb{X}_{n-1}, \mathbb{Q}_{n-1,i}^{T} \rangle_n.
$$
Therefore, \eqref{RRC} for $i=1,2$, provides the square homogeneous linear system of size $(n+1) t_{n-1}$
\begin{equation}  \label{eq_sist}
\langle \mathbb{X}_{k}, \mathbb{Q}_{n-1,i}^{T} \rangle_n = \mathtt{0}_{(k+1) \times (n+1)},  \quad k=0, 1,2,\ldots,n-1.
\end{equation}

Now we look at the matrix of the  homogeneous system \eqref{eq_sist}. Setting
$$
\mathbb{Q}_{n-1,i} = \sum_{r=0}^{n-1} E_{r,i} \mathbb{X}_r \quad \mbox{and} \quad
\mathbb{Q}_{n-1,i}^T = \sum_{r=0}^{n-1}\mathbb{X}_r^T  E_{r,i}^T,
$$
where $E_{r,i}$ are constant $(n+1)\times (r+1)$ matrices, we write \eqref{eq_sist} as
$$
\sum_{r=0}^{n-1}\langle \mathbb{X}_{k}, \mathbb{X}_r^T \rangle_n E_{r,i}^T =  \mathtt{0}_{(k+1) \times (n+1)}, \quad k=0, 1, \ldots,n-1.
$$
Using the definition of $M_{r,s}^{(n)}$ given in \eqref{moment_s}, we can write the above linear system of equations as
$$
\begin{pmatrix}
M_{0,0}^{(n)} & M_{0,1}^{(n)} & \cdots & M_{0,n-1}^{(n)}\\[1ex]
M_{1,0}^{(n)} & M_{1,1}^{(n)} & \cdots & M_{1,n-1}^{(n)}\\[1ex]
\vdots & \vdots &   & \vdots   \\[1ex]
M_{n-1,0}^{(n)} & M_{n-1,1}^{(n)} & \cdots & M_{n-1,n-1}^{(n)}
\end{pmatrix}
\begin{pmatrix}
E_{0,i}^T    \\[1ex]
E_{1,i}^T    \\[1ex]
\vdots \\[1ex]
E_{n-1,i}^T
\end{pmatrix}
=
\begin{pmatrix}
\mathtt{0}_{n \times (n+1)}  \\[1ex]
\mathtt{0}_{(n-1) \times (n+1)}      \\[1ex]
\vdots \\[1ex]
\mathtt{0}_{1 \times (n+1)}
\end{pmatrix}.
$$
The coefficient matrix in the above homogeneous system is $\widehat{\mathcal{M}}_{n}$ defined in \eqref{matrixMn}, and, since $\det \widehat{\mathcal{M}}_{n}>0$, the unique solution of the system is zero and $\mathbb{Q}_{n-1,i}\equiv \mathtt{0}$. Therefore, the R$_{II}$ type three term relations \eqref{TTR_R_II} hold.

The reciprocal can be showed in an analogous way.

\bigskip

Finally, we can take $k=-1$ in \eqref{intint_2}. Since all integrals are well defined for $n\geqslant0$, we deduce
\begin{equation*}
\langle \mathbb{X}_{-1}, \mathbb{P}_n^{T}\rangle_n \,B_{n,i}^{T}
+ \langle \mathbb{X}_{-1}, \mathbb{P}_{n-1}^{T} \rangle_{n-1} \,  C_{n,i}^{T} = \mathtt{0}_{2 \times (n+1)},
\end{equation*}
and \eqref{BLCL} holds.
\end{proof}

%%%%%%%%%%%%%%%%%%%%%%%%%%%%%%%%%%%%%%%%%%%%%%%%%%%%%%%%%%%%%%%%%%%%%
\subsection{R$_{II}$ type three term recurrence relations}
%%%%%%%%%%%%%%%%%%%%%%%%%%%%%%%%%%%%%%%%%%%%%%%%%%%%%%%%%%%%%%%

Observe that the R$_{II}$ type three term relations given by \eqref{TTR_R_II} do not play a rule of recurrence relation,  since the matrices $A_{n,i}$, $i=1,2$ are non squared and they can not be invertible.
Next result brings R$_{II}$ type three term recurrence relations for bivariate polynomials. Given $M_1$, $M_2$, two matrices of the same size, we define the \emph{joint matrix} as
$$
M=\begin{pmatrix}
M_{1} \\
M_{2}
\end{pmatrix}.
$$

\begin{theorem}  \label{Theorem_TTRR}
Let $\{\mathbb{P}_n\}_{n\geqslant0}$ be a VOPS satisfying the three term relations \eqref{TTR_R_II}. If the matrices $A_{n,i}$, $i=1,2$, and the joint matrix $A_{n}$, have full rank, then
\begin{equation} \label{TTRR_R_II}
\mathbb{P}_{n+1} = x_1 D_{n,1}^T \,\mathbb{P}_{n}
+ x_2 D_{n,2}^T \, \mathbb{P}_{n}
+ E_n \, \mathbb{P}_{n}
+  x_1 \, x_2 \, F_n \, \mathbb{P}_{n-1},  \quad \ n \geqslant 0,
\end{equation}
with $\mathbb{P}_{-1} = 0$, $\mathbb{P}_{0} = (1)$, where
$
D_n^T = \left(  D_{n,1}^T \  D_{n,2}^T \right),
$
such that
$
D_{n,1}^T \, A_{n,1} + D_{n,2}^T \, A_{n,2} = I_{n+2},
$ (it is the pseudo inverse of $A_n$) and
$$
E_n = -\sum_{i=1}^{2}  D_{n,i}^T \, B_{n,i}, \quad \mbox{and} \quad
F_n = -\sum_{i=1}^{2}  D_{n,i}^T \, C_{n,i}.
$$
\end{theorem}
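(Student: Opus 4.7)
The plan is to stack the two R$_{II}$ type three term relations \eqref{TTR_R_II} for $i=1,2$ into a single block equation, and then left-multiply by a left inverse of the joint matrix $A_n$.

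First, I would form the joint vector on the left and the joint matrix on the right, writing
\begin{equation*}
\begin{pmatrix} x_1 \mathbb{P}_n \\ x_2 \mathbb{P}_n \end{pmatrix}
= \begin{pmatrix} A_{n,1} \\ A_{n,2} \end{pmatrix} \mathbb{P}_{n+1}
+ \begin{pmatrix} B_{n,1} \\ B_{n,2} \end{pmatrix} \mathbb{P}_n
+ x_1 x_2 \begin{pmatrix} C_{n,1} \\ C_{n,2} \end{pmatrix} \mathbb{P}_{n-1},
\end{equation*}
so the coefficient of $\mathbb{P}_{n+1}$ is exactly the joint matrix $A_n$ of size $(2n+2)\times(n+2)$.

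The next step is to produce the pseudo-inverse. Since $A_{n,1}$, $A_{n,2}$ and the joint matrix $A_n$ all have full rank by hypothesis, $A_n$ has full column rank $n+2$ (as $2n+2 \geqslant n+2$), so $A_n^T A_n$ is invertible and $D_n^T := (A_n^T A_n)^{-1} A_n^T$ is a left inverse of $A_n$, that is, $D_n^T A_n = I_{n+2}$. Partitioning $D_n^T = (D_{n,1}^T \ D_{n,2}^T)$ conformably with the block structure of $A_n$, this left-inverse identity becomes
\begin{equation*}
D_{n,1}^T A_{n,1} + D_{n,2}^T A_{n,2} = I_{n+2},
\end{equation*}
which is exactly the condition stated in the theorem.

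Finally, I would multiply the stacked relation on the left by $D_n^T$. The block $A_n$-term collapses to $\mathbb{P}_{n+1}$, and isolating this term on the left gives
\begin{equation*}
\mathbb{P}_{n+1} = x_1 D_{n,1}^T \mathbb{P}_n + x_2 D_{n,2}^T \mathbb{P}_n
- \Bigl(\sum_{i=1}^{2} D_{n,i}^T B_{n,i}\Bigr) \mathbb{P}_n
- x_1 x_2 \Bigl(\sum_{i=1}^{2} D_{n,i}^T C_{n,i}\Bigr) \mathbb{P}_{n-1},
\end{equation*}
which is \eqref{TTRR_R_II} with $E_n$ and $F_n$ as defined in the statement. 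No real obstacle appears here: the only thing to check is that the full-rank hypotheses guarantee existence of a left inverse with the required block decomposition, and that is already built into the full rank of the joint matrix $A_n$. The initial conditions $\mathbb{P}_{-1}=0$, $\mathbb{P}_0=(1)$ are inherited directly from those of \eqref{TTR_R_II}.
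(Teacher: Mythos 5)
Your proposal is correct and follows essentially the same route as the paper: both arguments exploit that the joint matrix $A_n$ has full column rank to obtain a left inverse $D_n^T=(D_{n,1}^T\ D_{n,2}^T)$ with $D_{n,1}^TA_{n,1}+D_{n,2}^TA_{n,2}=I_{n+2}$, then left-multiply the two relations \eqref{TTR_R_II} by $D_{n,1}^T$ and $D_{n,2}^T$ and add. The only cosmetic difference is that you exhibit the explicit choice $D_n^T=(A_n^TA_n)^{-1}A_n^T$, while the paper merely asserts the existence of a (not unique) left inverse.
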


\begin{proof}
Since the joint matrix $A_n$ of size $2(n+1)\times (n+2)$ has full rank, then there exists a (not unique) full rank matrix $D_n^T = \left(  D_{n,1}^T \  D_{n,2}^T \right)$ of size $(n+2)\times 2(n+1)$ such that
$$
D_n^T \,A_n = D_{n,1}^T\,A_{n,1} + D_{n,2}^T\,A_{n,2} = I_{n+2}.
$$
Therefore, multiplying \eqref{TTR_R_II} for $i=1$ by $D_{n,1}^T$, and for $i=2$ by $D_{n,2}^T$  by the left hand side, and sum, we get
\begin{equation*}
\sum_{i=1}^2 x_i D_{n,i}^T \mathbb{P}_n =
 \mathbb{P}_{n+1} +
\sum_{i=1}^2 \, D_{n,i}^T \, B_{n,i} \,\mathbb{P}_{n}
+ x_1 \,x_2 \,\sum_{i=1}^2  D_{n,i}^T\,  C_{n,i}\,\mathbb{P}_{n-1},
\end{equation*}
and   \eqref{TTRR_R_II} holds.
\end{proof}

%%%%%%%%%%%%%%%%%%%%%%%%%%%%%%%%%%%%%%%%%%%%%%%%%%%%
\section{A method for generating VOPS}
\label{sec_Koorwinder}
\setcounter{equation}{0}

Inspired by the well known Koornwinder's method (see Dunkl and Xu \cite{DX14} and Koornwinder \cite{Ko75}) used to obtain bivariate orthogonal polynomials from univariate orthogonal polynomials, we develop a similar construction to obtain VOPS.

\medskip

Let us consider two weight functions in one variable, $w_1(x_1)$ defined in $(a,b)\subset \mathbb{R}^+$ and $w_2(x_2)$ in $(c,d)\subset \mathbb{R}^+$, and let $\rho$ be a positive polynomial of degree one in  $(a,b)$. We can construct a weight function in two variables as
$$
W(x_1,x_2)=w_1(x_1) w_2\left(\frac{x_2}{\rho(x_1)}\right),
$$
defined on
$$
\Omega=\left\{(x_1,x_2) \, :\, a<x_1<b, \, c\rho(x_1)<x_2<d\rho(x_1) \right\} \subset \mathbb{R}^2_+.
$$

For a fixed $n$ and a fixed $k$, we consider the sequences of monic orthogonal polynomials, $\{p_{m}^{(n,k)}\}_{m\geqslant 0}$ and $\{q_m^{(n)}\}_{m\geqslant 0}$, with respect to the weight functions
$$
\frac{\rho(x_1)^{2k+1} w_1(x_1)}{\rho(x_1)^{n} x_1^n} \quad \text{and} \quad \frac{w_2(x_2)}{x_2^n}\,,
$$
respectively.
We can write VOPS associated with $W(x_1,x_2)$
$$
\mathbb{P}_n=(P_{n,0}(x_1,x_2),P_{n-1,1}(x_1,x_2),\dots,P_{0,n}(x_1,x_2))^T
$$
as
$$
P_{n-k,k}(x_1,x_2)=p_{n-k}^{(n,k)}(x_1)\, \rho(x_1)^k q_k^{(n)}\left(\frac{x_2}{\rho(x_1)}\right).
$$
To prove the orthogonality \eqref{RII_ort_con}, let $m=0,1,\dots, n-1$ and $j \leqslant m$, then
$$
\begin{aligned}
I & = \iint_{\Omega}
x_1^{m-j}  x_2^j
P_{n-k,k}(x_1,x_2)
W_n(x_1,x_2) d x_1 d x_2
\\
&= \iint_{\Omega}
x_1^{m-j} x_2^j \
p^{(n,k)}_{n-k}(x_1) \rho(x_1)^k
q^{(n)}_k \left(\frac{x_2}{\rho(x_1)}\right)
\frac{w_1(x_1)}{x_1^n} \frac{w_2\left(\frac{x_2}{\rho(x_1)}\right)}{x_2^n} d x_1 d x_2,
\end{aligned}
$$
Setting $\displaystyle t= x_2/\rho(x_1)$,
since $c\rho(x_1) < x_2 < d \rho(x_1)$, then $c<t<d$ and the integral can be expressed as
$$
I=\int_{c}^{d}
\int_{a}^{b}
x_1^{m-j} t^j \rho(x_1)^{j+k+1}
p^{(n,k)}_{n-k}(x_1)
q^{(n)}_k (t)\frac{w_1(x_1)}{\rho(x_1)^n x_1^n}
\frac{w_2(t)}{t^n} d x_1 d t,
$$
and we can split it into two integrals
$$
I=\int_{a}^{b} x_1^{m-j} \, p^{(n,k)}_{n-k}(x_1)\frac{\rho(x_1)^{j+k+1}w_1(x_1)}{\rho(x_1)^n x_1^n} d x_1
\int_{c}^{d}  t^j  \, q^{(n)}_k (t) \frac{w_2(t)}{t^n}   d t.
$$
When $j<k$,  since the second integral in $I$ vanishes, then $I=0$. In the case $j=k$, we get
$$
\int_{c}^{d} t^k  \, q^{(n)}_k (t) \frac{w_2(t)}{t^n}   d t = C_k \neq 0,
$$
and we can write
$$
I=C_k \int_{a}^{b} x_1^{m-k} \, p^{(n,k)}_{n-k}(x_1) \frac{ \rho(x_1)^{2k+1}w_1(x_1)}{\rho(x_1)^n x_1^n} d x_1.
$$
Using the orthogonality of $p^{(n,k)}_{n-k}(x_1)$, since $m-k<n-k$, we conclude that also $I=0$.

When $j>k$, we can write
$$
I=\int_{a}^{b} x_1^{m-j} p^{(n,k)}_{n-k}(x_1) \rho(x_1)^{j-k} \frac{ \rho(x_1)^{2k+1}w_1(x_1)}{\rho(x_1)^n x_1^n} d x_1
\int_{c}^{d}  t^j  \, q^{(n)}_k (t) \frac{w_2(t)}{t^n}   d t,
$$
we observe that the first integral vanishes, since  $x_1^{m-j} \rho(x_1)^{j-k}$ is a polynomial of degree $m-k<n-k$.

%%%%%%%%%%%%%%%%%%%%%%%%%%%%%%%%%%%%%%%%%%%%%%%%%%%%
\section{Examples}
%%%%%%%%%%%%%%%%%%%%%%%%%%%%%%%%%%%%%%%%%%%%%%%%%%%%

%%%%%%%%%%%%%%%%%%%%%%%%%%%%%%%%%%%%%%%%%
\subsection{VOPS in a rectangle}
%%%%%%%%%%%%%%%%%%%%%%%%%%%%%%%%%%%%

We consider the weight function in one variable
$$
w(t) = \frac{1}{\sqrt{t-a}\sqrt{b-t}}
$$
defined in $(a,b)$, where $0<a<b<\infty$.
Setting
$\beta = \sqrt{ab}$, the moments associated with $w$ satisfy
$ \mu_k = \beta^{2k+1} \mu_{-(k+1)}$ (see \cite{Ra94}).

\medskip

Then, considering two weight functions $ w_1(x_1)$ and $w_2(x_2)$ defined on positive intervals $(a,b)$ and $(c,d)$, respectively, we can construct the weight function in two variables as
$$
W(x_1,x_2) = w_1(x_1)w_2(x_2) =
\frac{1}{\sqrt{x_1-a}\sqrt{b-x_1}} \
\frac{1}{\sqrt{x_2-c}\sqrt{d-x_2}}
$$
defined on the rectangular region
$$\Omega=\left\{(x_1,x_2) \, :\, 0<a<x_1<b<\infty, \,  0< c<x_2<d < \infty \right\}.
$$
The Figure~\ref{fig:rectangle} shows the region  $\Omega.$
\begin{center}
	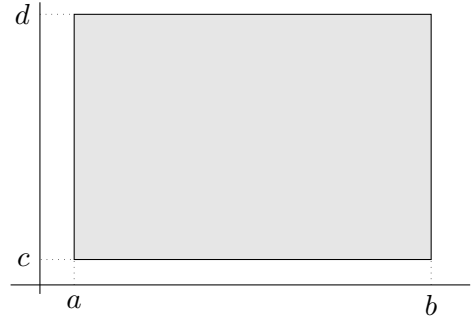
\begin{figure}[h]
\begin{tikzpicture}[xscale=1.3,yscale=0.4,scale=1]
	\tikzmath{\xa=1/4+0.1; \xb=4; \yc=4/9+0.4; \yd=9;}
	\draw (-0.3,0) -- (\xb+0.4,0);
	\draw (0,-0.3) -- (0,\yd+0.4);
	\coordinate [label=below:{$a$}]  (a) at (\xa,0);
	\coordinate [label=below:{$b$}]  (b) at (\xb,0);
	\coordinate [label=left:{$c$}]  (c) at (0,\yc);
	\coordinate [label=left:{$d$}]  (d) at (0,\yd);
	\coordinate (ac) at (\xa,\yc);
	\coordinate (ad) at (\xa,\yd);
	\coordinate (bc) at (\xb,\yc);
	\coordinate (bd) at (\xb,\yd);
	\draw[very thin, dotted] (a) -- (ac) -- (c);
	\draw[very thin, dotted] (d) -- (ad);
	\draw[very thin, dotted] (b) -- (bc);
	\filldraw[fill=black!10] (ac) -- (ad) -- (bd) -- (bc) -- cycle;
\end{tikzpicture}
		\caption{Rectangular region.}\label{fig:rectangle}
\end{figure}
\end{center}

\medskip

Considering
$\beta_1 = \sqrt{ab}$ and
$\beta_2 = \sqrt{cd},$
then the moments of $w_1(x_1)$ and $w_2(x_2)$ satisfy
$
\mu^{(1)}_r = \beta_1^{2r+1} \mu^{(1)}_{-(r+1)}
$ and
$
\mu^{(2)}_s = \beta_2^{2s+1} \mu^{(2)}_{-(s+1)},
$
respectively.
Hence, we observe that the moments associated with $W(x_1,x_2)$ satisfy the identity
$
\mu_{r,s} = \beta_1^{2r+1}  \beta_2^{2s+1} \mu_{-(r+1),-(s+1)}.
$

\medskip

For a fixed $n$, we consider the sequences of monic orthogonal polynomials, $\{ p^{(n)}_m \}_{m\geqslant 0}$ and
$\{ q^{(n)}_m \}_{m\geqslant 0}$, with respect to the weight functions
$$ \frac{w_1(x_1)}{x_1^n} \quad \mbox{and} \quad \frac{w_2(x_2)}{x_2^n},$$
respectively.  Hence we can write  the monic VOPS associated with  $W(x_1,x_2)$,
$$
\mathbb{P}_n = \left(P_{n,0}(x_1,x_2), P_{n-1,1}(x_1,x_2), \ldots, P_{0,n}(x_1,x_2)\right)^T,
$$
as
$$
P_{n-k,k}(x_1,x_2) = p^{(n)}_{n-k}(x_1) q^{(n)}_k (x_2),
$$
since
$$
\int_{c}^{d}\int_{a}^{b}
x_1^j \, x_2^h \,
p^{(n)}_{n-k}(x_1) \, q^{(n)}_k (x_2) \,
\frac{w(x_1)}{x_1^n}
\frac{w(x_2)}{x_2^n} d x_1 d x_2 = 0,
$$
for $0 \leqslant j < n-k \  \mbox{and} \
0 \leqslant h < k.$

In order to give some numerical computations, let us choose specific values of the parameter, as $a=1/4$, $b=4$, $c=4/9$, and $d=9$. Then the weight function becomes
\begin{equation}
\label{wf_ret}
W(x_1,x_2) =  \frac{1}{\sqrt{x_1-\frac{1}{4}}\sqrt{4-x_1}} \frac{1}{\sqrt{x_2-\frac{4}{9}}\sqrt{9-x_2}},
\end{equation}
defined on $\Omega = \{(x_1, x_2) \, : \, 1/4< x_1 <4, \ 4/9< x_2<9 \}.$

Table \ref{tOP1var} shows the sequence of monic orthogonal polynomials in one variable  $\{ p^{(n)}_m \}_{m\geqslant 0}$ and
$\{ q^{(n)}_m \}_{m\geqslant 0}$, with respect to the weight functions
$ 1/(\sqrt{x_1-1/4} \, \sqrt{4-x_1} \, x_1^n)$ and
$ 1/(\sqrt{x_2-4/9} \, \sqrt{9-x_2}\, x_2^n),$
respectively.

The entries of the first VOPS with respect the weight function \eqref{wf_ret} are given in Table \ref{tVOPS_ex1}.

\begin{table}[ht!]
\begin{tabular}{ccccc}
$k$ & $0$ & $1$ & $2$ & $3$ \\
\hline \hline
$p^{(1)}_k(x_1)$ & $1$ & $x_1-1$\\[1ex]
$q^{(1)}_k(x_2)$ & $1$ & $x_2-2$\\[1ex]
\hline
$p^{(2)}_k(x_1)$ & $1$ & $x_1-\frac{8}{17}$  & $x_1^2 - \frac{25}{8}x_1 +1$\\[2ex]
$q^{(2)}_k(x_2)$ & $1$ & $x_2-\frac{72}{85}$ & $x_2^2 - \frac{121}{18}x_2+4$\\[1.5ex]
\hline
$p^{(3)}_k(x_1)$ & $1$ & $x_1 - \frac{272}{803}$  & $x_1^2 - \frac{59}{25}x_1 + \frac{16}{25}$ & $x_1^3 - \frac{75}{16} x_1^2 + \frac{75}{16}x_1 - 1$\\[2ex]
$q^{(3)}_k(x_2)$ & $ \ 1 \ $ & $ \ x_2 - \frac{4080}{6793} \ $ & $ \ x_2^2 - \frac{582}{121}x_2 + \frac{288}{121} \ $ & $ \ x_2^3 - \frac{121}{12} x_2^2 + \frac{121}{6}x_2 - 8 \ $\\[1.5ex]
\hline\\
\end{tabular}
\caption{Table of the monic orthogonal polynomials $p^{(n)}_k(x_1)$ and  $q^{(n)}_k(x_2)$, for $1 \leqslant n \leqslant 3$ and degree $0 \leqslant k \leqslant n$.}
\label{tOP1var}
\end{table}

\begin{table}[ht!]
\begin{tabular}{ll}
\hline \hline
$P_{0,0}$ & $1$\\[1ex]
\hline
$P_{1,0}$ & $x_1-1$\\[1ex]
$P_{0,1}$ & $x_2-2$\\[.5ex]
\hline
$P_{2,0}$ & $x_1^2 - \frac{25}{8}x_1 + 1$\\[2ex]
$P_{1,1}$ & $x_1x_2 - \frac{72}{85}x_1  - \frac{8}{17}x_2 + \frac{576}{1445}$\\[2ex]
$P_{0,2}$ & $x_2^2  - \frac{121}{18}x_2 + 4$\\[1.5ex]
\hline
$P_{3,0}$ & $x_1^3 - \frac{75}{16} x_1^2 + \frac{75}{16}x_1 - 1$\\[2ex]
$P_{2,1}$ & $x_1^2x_2  - \frac{4080}{6793} x_1^2
- \frac{59}{25}x_1x_2
+ \frac{48144}{33965}x_1
+ \frac{16}{25}x_2
- \frac{13056}{33965}$\\[2ex]
$P_{1,2}$ & $x_1x_2^2  - \frac{582}{121} x_1x_2
- \frac{272}{803} x_2^2
+ \frac{288}{121}x_1
+ \frac{158304}{97163}x_2
- \frac{78336}{97163}$\\[2ex]
$P_{0,3}$ & $x_2^3 - \frac{121}{12} x_2^2 + \frac{121}{6}x_2 - 8$\\[1.5ex]
\hline \\
\end{tabular}
\caption{Entries $P_{n-k,k}$ of the first VOPS with respect the weight function \eqref{wf_ret} for degree $0 \leqslant n \leqslant 3$  and for  $0 \leqslant k \leqslant n$.}
\label{tVOPS_ex1}
\end{table}

In this case the matrices in \eqref{k=n},
$ \Lambda_n = \langle \mathbb{X}_{n},\mathbb{P}_n^T\rangle_n$ are diagonal, for example for $n=3$ we get
$$
\Lambda_3 = \langle \mathbb{X}_{3},\mathbb{P}_3^T\rangle_3
=
\begin{pmatrix}
\frac{183411}{524288} & 0 & 0 & 0 \\
0 & \frac{160083}{2717200} & 0 & 0 \\
0 & 0 & \frac{60025}{194326} & 0  \\
0 & 0 & 0 & \frac{94472147}{2985984}
\end{pmatrix}.
$$
%%%%%%%%%%%%%%%%%%%%%%%%%%%%%%%%%%%%%%%%%
\subsection{VOPS in a triangle}
%%%%%%%%%%%%%%%%%%%%%%%%%%%%%%%%%%%%%%%%%

Using the construction described in Section \ref{sec_Koorwinder}, if we choose
$$
w_1(x_1) = (x_1-a)^\alpha  (b-x_1)^{\beta+\gamma} \quad \mbox{and} \quad w_2(x_2) = (x_2-c)^\beta  (d-x_2)^{\gamma},
$$
defined in $(a,b)$ and $(c,d)$, respectively, and $\rho(x) = b-x_1,$
then the weight function in two variables takes the form
\begin{eqnarray*}
W(x_1,x_2)
&=& (x_1-a)^\alpha  (b-x_1)^{\beta+\gamma}
\left(\frac{x_2}{b-x_1}-c\right)^\beta
\left(d-\frac{x_2}{b-x_1}\right)^{\gamma} \\
&=& (x_1-a)^\alpha
\left(x_2-c(b-x_1)\right)^\beta
\left(d(b-x_1)-x_2\right)^{\gamma},
\end{eqnarray*}
defined in the triangular region
$$
\Omega=\left\{(x_1,x_2) \, :\, a<x_1<b, \, c(b-x_1)<x_2<d(b-x_1) \right\},
$$
as in Figure~\ref{fig:triangle}.

\begin{center}
	\begin{figure}[h]
\begin{tikzpicture}[xscale=1,yscale=1,scale=0.7]
	\tikzmath{\xa=1;\xb=8;\yc=2.4;\yd=6;}
	\draw (-0.3,0) -- (\xb+0.5,0);
	\draw (0,-0.3) -- (0,\yd+0.5);
	\coordinate [label=below:{$a$}]  (a) at (\xa,0);
	\coordinate [label=below:{$b$}]  (b) at (\xb,0);
	\coordinate [label=left:{$c(b-a)$}]  (c) at (0,\yc);
	\coordinate [label=left:{$d(b-a)$}]  (d) at (0,\yd);
	\coordinate (ac) at (\xa,\yc);
	\coordinate (ad) at (\xa,\yd);
	\draw[very thin, dotted] (a) -- (ac) -- (c);
	\draw[very thin, dotted] (d) -- (ad);
	\filldraw[fill=black!10] (b) -- (ac) -- (ad) -- cycle;
\end{tikzpicture}
		\caption{Triangular region.}\label{fig:triangle}
\end{figure}
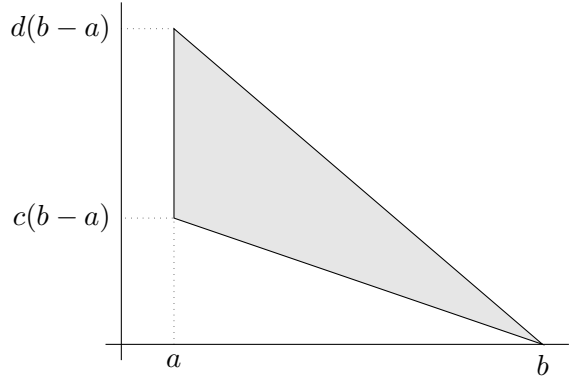
\end{center}

\medskip

Choosing $\alpha=1, \beta=2, \gamma=1, a=1$, $b=2$, $c=3$ and $d=5$, the weight functions become
$$
w_1(x_1) = (x_1-1)(2-x_1)^3,  \quad x_1 \in (1,2),
$$
$$
w_2(x_2) = (x_2-3)^2 (5-x_2),  \quad x_2 \in (3,5),
$$
and
\begin{equation} \label{ex_koor}
W(x_1,x_2)= \tau \, (x_1-1)\, (x_2+3x_1-6)^2 \, (10-5x_1-x_2),
\end{equation}
defined in $\Omega = \{(x_1, x_2)   \, : \, 1 < x_1 < 2,  \, 6-3x_1 < x_2 < 10 -5x_1 \}$. Here $\tau = 45/2$ is chosen so that
$\mu_{0,0}=1$.

Since
$$
P_{n-k,k}(x_1,x_2)=p_{n-k,k}^{(n)}(x_1)\, \rho(x_1)^k q_k^{(n)}\left(\frac{x_2}{\rho(x_1)}\right),
$$
for fixed $n$ and $k$, we need to construct the orthogonal polynomials, $p_{n-k,k}^{(n)} $ and $q_k^{(n)}$,  with respect to the weight functions
$$
\frac{(x_1-1)(2-x_1)^{3+2k+1-n}}{ \, x_1^n} \quad \text{and} \quad \frac{(x_2-3)^2 (5-x_2)}{x_2^n}\,,
$$
respectively.

Table \ref{tVOPS_Koor} shows the entries of the VOPS with respect the weight function given in \eqref{ex_koor}.

\begin{table}[ht!]
\begin{tabular}{ll}
\hline \hline
$P_{0,0}$ & {\small $1$}\\[1ex]
\hline
$P_{1,0}$ & {\small $x_1 - 1.31041371$}\\[1ex]
$P_{0,1}$ & {\small $x_2
+ 4.16034291 x_1
- 8.32068582$}\\[.5ex]
\hline
$P_{2,0}$ & {\small $x_1^2
- 2.78649820 x_1
+ 1.90208670$}\\[1ex]
$P_{1,1}$ & {\small $x_1x_2
+ 4.11932686 x_1^2
- 13.3840858 x_1
- 1.24909536 x_2
+ 10.2908642$}\\[1ex]
$P_{0,2}$ & {\small $x_2^2
+ 8.18284423 x_1 x_2
+ 16.5699978 x_1^2
- 66.2799910 x_1
- 16.3656885 x_2$}\\
&  {\small
$+ 66.2799910$}\\[.5ex]
\hline
$P_{3,0}$ & {\small $x_1^3
- 4.37745730 x_1^2
+ 6.28003391 x_1
- 2.95184419$} \\[1ex]
$P_{2,1}$ & {\small  $x_1^2 x_2
+ 4.07720565 x_1^3
- 18.9879650 x_1^2
- 2.65710257 x_1 x_2
+ 28.7424743 x_1$} \\[.5ex]
&  {\small
$+ 1.73534712 x_2
- 14.1507342$}\\[1ex]
$P_{1,2}$ & {\small  $x_1 x_2^2
+ 8.13057120 x_1^2 x_2
+ 16.3546681 x_1^3
- 85.1393062 x_1^2
- 26.0650722 x_1 x_2$} \\[.5ex]
&  {\small
$
- 1.20581071 x_2^2
+ 144.301208 x_1
+ 19.6078596 x_2
- 78.8825355$}\\[1ex]
$P_{0,3}$ & {\small  $x_2^3
+ 48.9437310 x_1^2 x_2
+ 12.1645735 x_1 x_2^2
+ 65.1198605 x_1^3
- 390.719163 x_1^2 $} \\[.5ex]
&  {\small
$
- 195.774924 x_1 x_2
- 24.3291470 x_2^2
+ 781.4383265x_1
+ 195.774924 x_2$}\\
& {\small $- 520.958883$}\\[.5ex]
\hline \\
\end{tabular}
\caption{Entries $P_{n-k,k}$ of the first VOPS with respect to the weight function \eqref{ex_koor} for degree $0 \leqslant n \leqslant 3$ and for $0 \leqslant k \leqslant n$. }
\label{tVOPS_Koor}
\end{table}

\medskip

In this case the matrix in \eqref{k=n} for $n=2$, is given by
$$
\Lambda_2 =
\begin{pmatrix}
0.000144303270 & 0 & 0 \\
-0.00059443234  & 0.000138244373 & 0  \\
0.00247304236    & -0.00113123217 & 0.000759945541
\end{pmatrix}.
$$

\medskip

%%%%%%%%%%%%%%%%%%%%%%%%%%%%%%%%%%%%%%%%%
\subsection{VOPS in a shifted simplex}
%%%%%%%%%%%%%%%%%%%%%%%%%%%%%%%%%%%%

We consider the family of VOPS on the shifted simplex associated with  the weight function
\begin{equation}\label{shifted}
W(x_1,x_2)= (x_1-a)^\alpha (x_2-a)^\beta  (a+b-x_1-x_2)^\gamma,
\end{equation}
where $\alpha, \beta, \gamma > -1$, defined on the region
$$\Omega = \{(x_1, x_2) \in \mathbb{R}^2 : x_1\geqslant a, \, x_2 \geqslant a, \, a+b-x_1-x_2 \geqslant 0 \},$$
with $0<a<b<\infty$. Shifted simplex is represented in Figure~\ref{fig:simplex}.
\begin{center}
	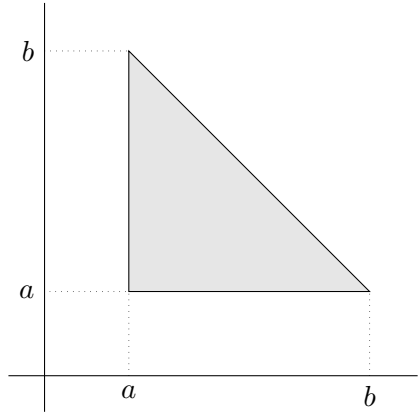
\begin{figure}[h]
\begin{tikzpicture}[xscale=1,yscale=1,scale=1.6]
	\tikzmath{\xa = 0.7; \xb = \xa+2; \yc = \xa; \yd = \xb; }
	\draw (-0.3,0) -- (\xb+0.4,0);
	\draw (0,-0.3) -- (0,\yd+0.4);
	\coordinate [label=below:{$a$}]  (a) at (\xa,0);
	\coordinate [label=below:{$b$}]  (b) at (\xb,0);
	\coordinate [label=left:{$a$}]  (c) at (0,\yc);
	\coordinate [label=left:{$b$}]  (d) at (0,\yd);
	\coordinate (ac) at (\xa,\yc);
	\coordinate (ad) at (\xa,\yd);
	\coordinate (bc) at (\xb,\yc);
	\draw[very thin, dotted] (a) -- (ac) -- (c);
	\draw[very thin, dotted] (d) -- (ad);
	\draw[very thin, dotted] (b) -- (bc);
	\filldraw[fill=black!10] (ac) -- (ad) -- (bc) -- cycle;
	\end{tikzpicture}
	\caption{Shifted simplex.}\label{fig:simplex}
	\end{figure}
\end{center}

Choosing $\alpha=3, \beta=2, \gamma=1, a=1$, and $b=2$, the weight function
$$
W(x_1,x_2)= (x_1-1)^3 (x_2-1)^2  (3-x_1-x_2)
$$
is defined in $\Omega = \{(x_1, x_2) \in \mathbb{R}^2 \, : \, x_1\geqslant 1, \, x_2 \geqslant 1, \, 3-x_1-x_2 \geqslant 0 \}$.

Using Theorem \ref{3.2} we can calculate the monic VOPS associated with  $W(x_1,x_2)$ given by \eqref{shifted}, see Table \ref{tVOPS_shifted}.

\begin{table}[ht!]
\begin{tabular}{ll}
\hline \hline
$P_{0,0}$ & {\small $1$}\\[1ex]
\hline
$P_{1,0}$ & {\small $x_1 -  1.43796769$}\\[1ex]
$P_{0,1}$ & {\small $x_2 -  1.32696025$}\\[.5ex]
\hline
$P_{2,0}$ & {\small $x_1^2
- 2.88799337 x_1
+ 0.00263767 x_2
+ 2.05644459 $}\\[1ex]
$P_{1,1}$ & {\small $x_1x_2
- 1.26099644 x_1
- 1.35144600 x_2
+ 1.71293494$}\\[1ex]
$P_{0,2}$ & {\small $x_2^2
+ 0.00131771 x_1
- 2.70824586 x_2
+ 1.80864659$}\\[.5ex]
\hline
$P_{3,0}$ & {\small $x_1^3
- 4.34366084 x_1^2
+ 0.00992183 x_1 x_2
+ 0.00014521 x_2^2
+ 6.21350091 x_1
$}  \\[.5ex]
&  {\small
$- 0.01365970 x_2
- 2.92734391$}\\[1ex]
$P_{2,1}$ & {\small $x_1^2 x_2
- 1.21564853 x_1^2
- 2.74130572 x_1 x_2
+ 0.00177590 x_2^2
+ 3.34491332 x_1$} \\[.5ex]
&  {\small
$+ 1.85415301 x_2
- 2.27334201$}\\[1ex]
$P_{1,2}$ & {\small  $x_1x_2^2
+ 0.00091564 x_1^2
- 2.58880086 x_1 x_2
- 1.29133163 x_2^2
+ 1.65554061 x_1$} \\[.5ex]
&  {\small
$+ 3.35648957 x_2
- 2.15629266$}\\[1ex]
$P_{0,3}$ & {\small  $x_2^3
+ 0.00006145 x_1^2
+ 0.00572113 x_1 x_2
- 4.11853295 x_2^2
- 0.00732316 x_1$} \\[.5ex]
&  {\small
$+ 5.58718398 x_2
- 2.49732589$}\\[.5ex]
\hline \\
\end{tabular}
\caption{Entries $P_{n-k,k}$ of the first VOPS with respect the weight function \eqref{shifted} for degree $0 \leqslant n \leqslant 3$ and for $0 \leqslant k \leqslant n$. }
\label{tVOPS_shifted}
\end{table}

Also,
$$
\Lambda_2 = \langle \mathbb{X}_{2},\mathbb{P}_2^T\rangle_2 = \begin{pmatrix}
0.000263986510  & 0.000143544887 & 0.000089632056 \\
0.000143544887  & 0.000157172682 & 0.000132333688 \\
0.000089632057  & 0.000132333688 & 0.000214040951
\end{pmatrix}
$$
and
$$
\Upsilon_2 = \langle \mathbb{X}_{-1},\mathbb{P}_2^T\rangle_2 = \begin{pmatrix}
0.000090452349 & 0.000049064200 & 0.000030587667 \\
0.000036048468 & 0.000053277399 & 0.000086301383
\end{pmatrix}.
$$

\bigskip

\section*{Acknowledgements}

C.F.B. thanks the support by the grant 2022/09575-5 from FAPESP, Sao Paulo, Brazil, and the support by the grant 88887.310463/2018-00 from the program CAPES/PrInt of Brazil.

A.M.D., L.F., and T.E.P. thank grant CEX 2020-001105-M funded by MCIN/AEI/10.13039/501100011033 and Research Group Goya-384, Spain.

T.E.P. also thanks grant 88887.911704/2023-00 from CAPES/PrInt of Brazil.

%%%%%%%%%%%%%%%%%%%%%%%%%%%%%%%%%%%%%%%%%%%%%%%%%%%%%%%%

\end{document}